\newtheorem{theo}{Theorem}[section]
\newtheorem{prop}[theo]{Proposition}
\newtheorem{coro}[theo]{Corollary}
\newtheorem{lemm}[theo]{Lemma}
\newtheorem{conj}[theo]{Conjecture}
\theoremstyle{remark}
\newtheorem{rema}[theo]{\bf Remark}
\theoremstyle{remark}
\newtheorem{exam}[theo]{\bf Example}
\theoremstyle{remark}
\begin{document}

\title{Block designs and prime values of polynomials}

\author{Gareth A. Jones and Alexander K. Zvonkin}

\address{School of Mathematical Sciences, University of Southampton, Southampton SO17 1BJ, UK}
\email{G.A.Jones@maths.soton.ac.uk}

\address{LaBRI, Universit\'e de Bordeaux, 351 Cours de la Lib\'eration, F-33405 Talence Cedex, France}

\email{zvonkin@labri.fr}

\subjclass[2010]{05B05, 11N32} 
% block designs, primes represented by polynomials

\keywords{block design, polynomial, prime number, Bateman--Horn Conjecture, Bunyakovsky Conjecture}

%arXiv classes: primary CO, secondary NT
% 11N32 Primes represented by polynomials,
% 05B05 Block designs

\begin{abstract}
A recent construction by Amarra, Devillers and Praeger of block designs with specific parameters depends on certain quadratic polynomials, with integer coefficients, taking prime power values. The Bunyakovsky Conjecture, if true, would imply that each of them takes infinitely many prime values, giving an infinite family of block designs with the required parameters. We have found large numbers of prime values of these polynomials, and the numbers found agree very closely with the estimates for them provided by Li's recent modification of the Bateman--Horn Conjecture. While this does not prove that these polynomials take infinitely many prime values, it provides strong evidence for this, and it also adds extra support for the validity of the Bunyakovsky and Bateman-Horn Conjectures.
\end{abstract}

\maketitle

%%%%%%%%%%%%%%%%

\section{Introduction}

Block designs play a major role in combinatorics and finite geometry, and have many applications in statistics, specifically in the design of experiments. In~\cite{ADP}, Amarra, Devillers and Praeger have recently constructed families of highly symmetric $2$-designs which maximise certain parameters. Their constructions depend on certain quadratic polynomials with integer coefficients taking prime power values. Many of their polynomials satisfy three simple necessary conditions which Bunyakovsky~\cite{Bun-1857} in 1857 conjectured were also sufficient for any polynomial to take infinitely many prime values. Unfortunately, this conjecture has been proved only for polynomials of degree~$1$ (Dirichlet's Theorem on primes in an arithmetic progression). Nevertheless, the Bateman--Horn Conjecture~\cite{BH}, dating from 1962 and also proved only for degree $1$, gives estimates $E(x)$ for the number $Q(x)$ of positive integers $t\le x$ at which a given polynomial takes prime values. Using a recent improvement to the Bateman--Horn Conjecture due to Li~\cite{W.Li}, we calculated these estimates $E(x)$ for some of the simpler polynomials arising in~\cite{ADP}, taking $x=10^{8}$, and compared them with the actual numbers $Q(x)$ found by computer searches. As in various other applications of this conjecture (see~\cite{JZ21, JZ21a} for example), the estimates $E(x)$ are remarkably close to the actual values $Q(x)$. Although this  does not prove the existence of infinite families of block designs, the accuracy of the estimates, together with the abundance of examples found, provides strong evidence for it, and it also adds to the growing body of evidence in favour of the more general Bunyakovsky and Bateman--Horn Conjectures.

There have been many number-theoretic applications of the Bateman--Horn Conjecture (see~\cite{AFG} for a survey), and a handful in areas such as combinatorics~\cite{EKN}, cryptography~\cite{BG, Scholl17, Scholl19, Sha}, elliptic curves~\cite{BPS, DS}, error-correcting codes~\cite{Kim} and fast integer multiplication~\cite{CT}. It seems likely that the present paper and~\cite{ADP} represent its first application to block designs, just as~\cite{JZ21, JZ21a} are the first in the areas of dessins d'enfants and permutation groups.

%%%%%%%%%%%%%

\section{Primes versus prime powers}\label{sec:versus}

Although the problem in~\cite{ADP} requires prime power values of certain polynomials $f_{n,r}(t)\in{\mathbb Z}[t]$, it is easier to estimate the distribution of their prime values, using the Prime Number Theorem and conjectures based on it. This restriction is no great loss, as the vast majority of prime powers, up to any given large bound, are in fact prime: if $\pi(x)$ is the usual function counting primes \linebreak $p\le x$, and $\Pi(x)$ is its analogue for prime powers $p^e\le x$, then $\pi(x)/\Pi(x)\to 1$ (quite rapidly) as $x\to\infty$. For example, $\pi(10^6)/\Pi(10^6)=78\,498/78\,734=0.9970002\ldots$, while  $\pi(10^9)/\Pi(10^9)=50\,847\,534/50\,851\,223=0.999927\ldots$ (see~\cite{Cook}). 
Nevertheless, we carried out a more restricted search, 
over $t=1,\ldots,10^7$, for prime power values $f_{n,r}(t)$ of the chosen polynomials, 
finding just a few squares and one cube (see Section~\ref{sec:powers}). However, in Section~\ref{t=0} we show how to realise any even power $p^{2i}>9$ of an odd prime $p$ as $f_{n,r}(0)$ for some polynomial $f_{n,r}$, a situation which has some interest for the construction of block designs.

%%%%%%%%%%%%%

\section {The Bunyakovsky Conjecture}\label{sec:Bunyakovsky}

If a non-costant polynomial $f(t)\in{\mathbb Z}[t]$ is to take infinitely many prime values for $t\in{\mathbb N}$ (equivalently, if it is prime for infinitely many such $t$), then the following conditions must be satisfied: 
\begin{itemize}
\item[(a)] $f$ must have a positive leading coefficient (otherwise it will take only finitely many positive values);
\item[(b)] $f$ must be irreducible in ${\mathbb Z}[t]$ (otherwise all but finitely many of its values will be composite);
\item[(c)] $f$ must not be identically zero modulo any prime $p$ (otherwise all its values will be divisible by $p$).
\end{itemize}

In 1857 Bunyakovsky~\cite{Bun-1857} conjectured that these three necessary conditions are also sufficient. (Condition~(c) is needed to avoid examples such as $t^2+t+2$, which satisfies (a) and (b) but takes only even values.) For instance, if this were true it would imply Landau's conjecture (studied also by Euler~\cite{Eul}) that there are infinitely many primes of the form $t^2+1$. However, the Bunyakovsky Conjecture has been proved only in the case where $f$ has degree $1$: this is Dirichlet's Theorem, that if $a$ and $b$ are coprime integers then there are infinitely many primes of the form $at+b$ (see~\cite[\S 5.3.2]{BS} for a proof).

%%%%%%%%%%%%%

\section{The Bateman--Horn Conjecture}\label{sec:BHC}

In 1962 Bateman and Horn~\cite{BH} proposed a very general conjecture (in what follows
we will use the abbreviation BHC) which comprises many previous conjectures and theorems 
and gives quantified versions of them. It deals with a finite set of polynomials simultaneously 
taking prime values. Though for our purposes it is sufficient to consider the case 
of a single polynomial, we give here the full version of the BHC. If we incorporate a 
recent improvement due to Li~\cite{W.Li}, we get the following statement:

\begin{conj}[Bateman and Horn, 1962; Li, 2019]
Let $f_1,\ldots,f_k\in\mathbb{Z}[t]$ be coprime polynomials satisfying conditions
(a) and (b) of the Bunyakovsky Conjecture, and let their product $f=f_1\cdots f_k$ 
satisfy condition (c). Denote by $Q(x)$ the number of $t\in\mathbb{N}$, $t\le x$, 
such that all $f_i(t)$, $i=1,\ldots,k$, are prime. Then the asymptotic estimate 
$E(x)$ for the number $Q(x)$ is given by the following formula:
\begin{equation}\label{eq:BH-Q}
Q(x)\sim E(x):=C\negthinspace\int_a^x\negthinspace\frac{dt}{\prod_{i=1}^k\ln f_i(t)}
\quad\hbox{as}\quad x\to\infty
\end{equation}
where
\begin{equation}\label{eq:BH-C}
C=C(f):=\prod_p\left(1-\frac{1}{p}\right)^{-k}\left(1-\frac{\omega_f(p)}{p}\right)
\end{equation}
with the product over all primes $p$, and where $\omega_f(p)$ is the number of congruence 
classes $t\in{\mathbb Z}_p$ such that $f(t)=0$. In (\ref{eq:BH-Q}), one chooses $a\ge 2$ 
large enough that the range of integration avoids singularities, where some $f_i(t)=1$. 
(In our applications we can always take $a=2$.)
\end{conj}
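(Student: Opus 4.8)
Since the displayed statement is a \emph{conjecture} that remains open beyond the linear case $k=1$, $\deg f_i=1$ (Dirichlet's Theorem), I cannot expect a genuine proof; the plan is instead to reconstruct the \emph{heuristic} that produces the formula, isolating precisely the point at which rigour breaks down. The guiding idea is to treat $f_i(t)$, for $t$ drawn uniformly from $\{1,\dots,x\}$, as though it were a random integer of comparable size, and to estimate the probability that all $k$ of the $f_i(t)$ are simultaneously prime.

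First I would write down the naive probability. By the Prime Number Theorem an integer of size $N$ is prime with density about $1/\ln N$, so a random $k$-tuple near the sizes $f_1(t),\dots,f_k(t)$ would be jointly prime with probability $\prod_{i=1}^k 1/\ln f_i(t)$. Summing over $t\le x$ and replacing the sum by an integral yields the bare integral in \eqref{eq:BH-Q}, but without the constant $C$. Next I would insert the arithmetic corrections prime by prime. For a fixed prime $p$ the values $f_i(t)$ are \emph{not} uniformly distributed modulo $p$: the proportion of residues $t\in{\mathbb Z}_p$ for which $p$ divides some $f_i(t)$ is $\omega_f(p)/p$, so the true local probability that no $f_i(t)$ is divisible by $p$ is $1-\omega_f(p)/p$, whereas the random-integer model predicts $(1-1/p)^k$. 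Dividing the former by the latter and multiplying over all $p$ produces the correction factor $C$ of \eqref{eq:BH-C}, and combining it with the previous step gives the conjectured $E(x)$.

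A subsidiary step is to verify that this infinite product converges. For $p$ not dividing the discriminant of $f$, $\omega_f(p)$ equals the number of distinct roots of $f$ modulo $p$, and expanding each Euler factor gives $(1-1/p)^{-k}(1-\omega_f(p)/p)=1+(k-\omega_f(p))/p+O(p^{-2})$; since Chebotarev's theorem forces the average of $\omega_f(p)$ over primes to equal $k$, the sum $\sum_p (k-\omega_f(p))/p$ converges (conditionally), and with it the product $C$. This part is routine and can be made fully rigorous.

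The hard part, and indeed the reason this is still only a conjecture, is justifying the two independence assumptions that the heuristic quietly makes: that the event ``$f_i(t)$ is prime'' genuinely behaves like that of a random integer of the same size (an equidistribution statement for primes in the sparse value set $\{f_i(t)\}$), and that the local conditions at distinct primes $p$ are asymptotically independent, so that the corrections may be multiplied. For $\deg f_i=1$ both assertions can be established and one recovers Dirichlet's Theorem with the correct constant; for degree $\ge 2$ no unconditional input of the required strength is available, and it is exactly here that a proof would have to supply genuinely new information about primes represented by polynomials. Li's refinement does not remove this obstacle: it sharpens the \emph{shape} of the main term, replacing cruder $(\ln x)^{-k}$ approximations by the integral against $\prod_{i=1}^k 1/\ln f_i(t)$ and controlling the lower-order contributions, which is what makes $E(x)$ so accurate empirically, but the core analytic difficulty is untouched.
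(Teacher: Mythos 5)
Your proposal is essentially the paper's own treatment: since the statement is a conjecture (proved only in the linear case), the paper likewise offers only a heuristic (its Section on heuristic arguments), built from exactly the two ingredients you use --- the Prime Number Theorem reading of $1/\ln f_i(t)$ as a primality probability with an independence assumption across the $f_i$, yielding the integral, and the local ratio $\bigl(1-\omega_f(p)/p\bigr)\big/\bigl(1-1/p\bigr)^k$ at each prime, yielding $C$ --- and it stops precisely where you do, at the unjustifiable independence assumptions. The only point of divergence is emphasis: you call the convergence of the product defining $C(f)$ ``routine,'' whereas the paper isolates it as a separate lemma, stresses that it is far from trivial (the first detailed proof, in \cite{AFG}, takes seven pages), and notes that the convergence is only conditional, so your Chebotarev-based sketch, while the right idea, understates the work required to make that step rigorous.
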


\begin{lemm}[Constant $C(f)$]
The product in (\ref{eq:BH-C}) converges to a constant $C>0$.
\end{lemm}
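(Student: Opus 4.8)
The plan is to show that the infinite product in (\ref{eq:BH-C}) converges by analyzing the size of each factor for large $p$ and verifying that the deviations from $1$ are summable. The key observation is that the problematic growth from the $k$-fold factor $(1-1/p)^{-k}$ must be cancelled by the factor $(1-\omega_f(p)/p)$, and this cancellation hinges on controlling $\omega_f(p)$, the number of roots of $f$ modulo $p$.

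First I would establish the crucial arithmetic input: for all but finitely many primes $p$, we have $\omega_f(p) = k$. Indeed, since $f = f_1\cdots f_k$ is a product of $k$ distinct irreducible polynomials, its discriminant (more precisely, the product of the discriminants of the $f_i$ together with the resultants of distinct pairs) is a nonzero integer, so for all primes $p$ not dividing this integer, $f$ remains separable modulo $p$ and the $f_i$ remain pairwise coprime modulo $p$. For such $p$, each irreducible factor $f_i$ of degree $d_i$ contributes on average one root per prime, and more to the point the expected value of $\omega_f(p)$ over primes is exactly $k$; what I actually need is the weaker pointwise statement that $\omega_f(p) = O(1)$, which is immediate since $\omega_f(p) \le \deg f = \sum_i \deg f_i$ for every $p$.

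Next I would take logarithms and expand. Writing the $p$-th factor as
\begin{equation*}
\left(1-\frac{1}{p}\right)^{-k}\left(1-\frac{\omega_f(p)}{p}\right),
\end{equation*}
I would use the expansions $-k\ln(1-1/p) = k/p + O(1/p^2)$ and $\ln(1-\omega_f(p)/p) = -\omega_f(p)/p + O(1/p^2)$, so that the logarithm of the $p$-th factor equals
\begin{equation*}
\frac{k-\omega_f(p)}{p} + O\!\left(\frac{1}{p^2}\right).
\end{equation*}
The $O(1/p^2)$ terms are summable, so convergence of the whole product reduces to convergence of $\sum_p (k-\omega_f(p))/p$. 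Since $k-\omega_f(p)$ vanishes for all but finitely many $p$ (by the separability argument above), this sum has only finitely many nonzero terms and hence converges trivially. This already yields convergence to a finite nonzero limit, giving $C>0$ once one checks that no individual factor vanishes — which holds because condition (c) guarantees $\omega_f(p)<p$, so each factor $1-\omega_f(p)/p$ is strictly positive.

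The main obstacle, and the only genuinely substantive point, is the separability claim that $\omega_f(p)=k$ for almost all $p$; everything after that is a routine $\sum 1/p^2$ estimate. I would expect to argue it cleanly by invoking that a squarefree integer polynomial has nonzero discriminant and that reduction modulo a prime not dividing the discriminant preserves the root count with multiplicity one per distinct irreducible factor. One subtlety worth flagging is that I have relied on the finiteness of the set of bad primes rather than on any cancellation in an infinite sum; a more delicate (but unnecessary here) approach would keep all primes and appeal to the Prime Number Theorem in arithmetic progressions or the Chebotarev density theorem to show $\sum_p(k-\omega_f(p))/p$ converges even when infinitely many terms are nonzero. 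Since the finite-exceptional-set argument suffices for the stated lemma, I would present that shorter route.
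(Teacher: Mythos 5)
Your proposal has a fatal gap at precisely the point you identify as "the only genuinely substantive point": the claim that $\omega_f(p)=k$ for all but finitely many primes $p$ is false whenever some $f_i$ has degree at least $2$. Choosing $p$ away from the discriminants and pairwise resultants only guarantees that $f$ is \emph{squarefree} modulo $p$ (its roots mod $p$ are distinct, and the $f_i$ share none of them); it says nothing about \emph{how many} roots each $f_i$ has modulo $p$. An irreducible integer polynomial of degree $d_i\ge 2$ has a root count modulo $p$ that oscillates forever: for $t^2+1$ one has $\omega_f(p)=2$ when $p\equiv 1 \pmod 4$ and $\omega_f(p)=0$ when $p\equiv 3 \pmod 4$. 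The paper's own Example~\ref{ex:n=2} exhibits exactly this for the quadratics $f_{2,r}$ (where $k=1$): $\omega_f(p)=2$ or $0$ according to the congruence class of $p$ modulo $17$, and by Dirichlet's Theorem each case occurs for infinitely many $p$. Consequently $\sum_p (k-\omega_f(p))/p$ has infinitely many nonzero terms, of both signs and of size comparable to $1/p$; it cannot converge "trivially", and your finite-exceptional-set argument collapses.

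What is true, and what the real proof uses, is that $\omega_f(p)=k$ \emph{on average} over $p$: the convergence of $\sum_p (k-\omega_f(p))/p$ comes from cancellation, established via the prime ideal theorem (equivalently, Landau/Chebotarev-type density results applied to the number fields $\mathbb{Q}[t]/(f_i)$) combined with partial summation. In other words, the "more delicate (but unnecessary here)" route you set aside in your final paragraph is in fact the only route. This is also why the paper itself does not prove the lemma: it remarks that the statement "is far from trivial", that Bateman and Horn gave only hints, and that the first detailed proof, \cite[Theorem 5.4.3]{AFG}, takes seven pages. Your logarithmic expansion reducing everything to $\sum_p(k-\omega_f(p))/p$ plus a summable $O(1/p^2)$ error is the correct and standard first step; the entire difficulty lives in the sum you dismissed.
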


This statement is far from trivial. Bateman and Horn, in their original paper
\cite{BH}, limit themselves to a few hints. The first detailed proof was recently 
published in \cite[Theorem 5.4.3]{AFG}, and it takes seven pages. 

Since the integral in (\ref{eq:BH-Q}) diverges, we get the following

\begin{coro}[Infinitely many prime values]
The estimate $E(x)\to\infty$ as $x\to\infty$; therefore, $Q(x)$ also goes to infinity:
there are infinitely many integers $t\in\mathbb{N}$ such that all $f_i(t)$, $i=1,\ldots,k$,
are simultaneously prime.
\end{coro}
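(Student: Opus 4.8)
The plan is to combine the two ingredients already at hand: the positivity of the constant $C$, established in the preceding Lemma, and the divergence of the integral appearing in~(\ref{eq:BH-Q}). Together these give $E(x)\to\infty$ unconditionally, and then the conjectural asymptotic $Q(x)\sim E(x)$ furnished by the BHC forces $Q(x)\to\infty$ as well. Since $Q(x)$ counts the integers $t\le x$ at which all the $f_i$ are simultaneously prime, an unbounded $Q$ yields infinitely many such $t$.

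First I would analyse the integrand. Each $f_i$ has degree $d_i\ge 1$ (it is non-constant by condition (b)) and positive leading coefficient (condition (a)), so $f_i(t)\to\infty$ and $\ln f_i(t)\sim d_i\ln t$ as $t\to\infty$. Consequently
\begin{equation*}
\prod_{i=1}^k\ln f_i(t)\sim\Big(\prod_{i=1}^k d_i\Big)(\ln t)^k,
\end{equation*}
a fixed power of $\ln t$. In particular, for $t$ beyond some threshold the integrand is positive (the choice $a\ge 2$ already removes the singularities where some $f_i(t)=1$) and is bounded below by a constant multiple of $1/(\ln t)^k$.

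Next I would show that $\int_a^x(\ln t)^{-k}\,dt$ diverges. The cleanest route is a crude comparison: for every fixed $k$ one has $(\ln t)^k\le t^{1/2}$ for all sufficiently large $t$, so the integrand exceeds $t^{-1/2}$ there, and $\int^x t^{-1/2}\,dt\to\infty$. (Alternatively one may invoke the standard asymptotic $\int_2^x(\ln t)^{-k}\,dt\sim x/(\ln x)^k$ for the logarithmic integral.) Multiplying by the \emph{positive} constant $C$ from the Lemma then gives $E(x)\to\infty$; note that this is exactly where the Lemma is indispensable, since a vanishing $C$ would make the conclusion degenerate.

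Finally, invoking the BHC in the form $Q(x)/E(x)\to 1$, I would write $Q(x)=\big(Q(x)/E(x)\big)\,E(x)$ and let $x\to\infty$ to conclude $Q(x)\to\infty$. As $Q$ is a non-decreasing, integer-valued function of $x$, this forces it to assume arbitrarily large values, hence there are infinitely many $t\in\mathbb{N}$ with all $f_i(t)$ prime. I expect no serious obstacle: the divergence of the integral is elementary calculus, and the only point worth flagging is that $E(x)\to\infty$ holds unconditionally once the Lemma is granted, whereas the passage to $Q(x)\to\infty$ rests entirely on the (still conjectural) BHC.
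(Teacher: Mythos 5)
Your proposal is correct and follows essentially the same route as the paper, which simply asserts that the integral in (\ref{eq:BH-Q}) diverges, combines this with the positivity of $C$ from the preceding Lemma to get $E(x)\to\infty$, and then uses the conjectural asymptotic $Q(x)\sim E(x)$ to conclude. The only difference is that you fill in the elementary comparison argument (via $(\ln t)^k\le t^{1/2}$) for the divergence, which the paper leaves unstated.
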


As in the case of the Bunyakovsky Conjecture, the BHC, even when restricted to a single polynomial $f$, has been proved only in the case where $\deg f=1$. This is the quantified version of Dirichlet's Theorem, that for fixed coprime $a$ and $b$ the number of $t\le x$ such that $at+b$ is prime is asymptotic to
\[\frac{1}{\varphi(a)}\int_2^x\frac{dt}{\ln(at+b)},\]
where $\varphi$ is Euler's totient function. (Equivalently, the primes in the arithmetic progression $at+b$ are asymptotically equally distributed among the $\varphi(a)$ congruence classes of units mod~$a$; see \linebreak \cite[\S5.3.2]{BS} for a proof.)

An earlier special case of the BHC, applicable to a single quadratic polynomial $f$, is the Conjecture F of Hardy and Littlewood~\cite{HL}, giving similar estimates. For this reason, the constants $C(f)$ are sometimes known as Hardy--Littlewood constants.

%%%%%%%%%%%%%%%%%

\section{Heuristic argument for the ingredients of the Bateman--Horn Conjecture}\label{sec:heuristic}

Here we give a heuristic argument to explain certain ingredients 
of the formula (\ref{eq:BH-Q}) for the Bateman--Horn estimate $E(x)$. 

The Prime Number Theorem provides two asymptotic estimates for the number $\pi(x)$ of primes $p\le x$ as $x\to\infty$, namely
\begin{equation}\label{eq:PNT}
\pi(x)\sim\frac{x}{\ln x} \quad\quad\hbox{and}\quad\quad \pi(x)\sim {\rm Li}(x):=\int_2^x\frac{dt}{\ln t}.
\end{equation}
The first is easy to use, but not very accurate; the second, involving the {\sl offset 
logarithmic integral function}\/ ${\rm Li}(x)$, is harder to use but much more accurate. 
For example, the number of primes up to $10^{25}$ was computed in 2013 by J. Buethe, 
J. Franke, A. Jost, and T. Kleinjung (see \cite{OEIS-A006880}): it is equal to
$\pi(10^{25})=176\,846\,309\,399\,143\,769\,411\,680$.
The formula $x/\ln x$ approximates this number with the relative error $-1.77\%$, 
while the relative error of the estimate ${\rm Li}(10^{25})$ is $3.12\cdot 10^{-11}\%$.

In either case, (\ref{eq:PNT}) suggests that one can regard $1/\ln x$ as the probability 
that $x$ (or, rather, a randomly-chosen number close to $x$) is prime.
Consider the ``random variables'' $\xi_i(t)=1$ if $f_i(t)$ is prime, and $\xi_i(t)=0$ 
otherwise. The ``probability'' that $\xi_i(t)=1$ is $1/\ln f_i(t)$. If, in addition, we
presume that these variables, for any given $t$, are independent, then the probability
that all $f_i(t)$ are prime, or, in other words, the probability that the product
$\eta(t):=\xi_1(t)\cdots\xi_k(t)$ is equal to $1$, is
$\displaystyle P(t)=\frac{1}{\prod_{i=1}^k\ln f_i(t)}$. Notice that the mean value
of $\eta(t)$ is the expected value ${\rm E}(\eta(t))=P(t)$.

The random variable $\eta(t)$ is a ``counting function'': as a first estimate for 
the number of $t\le x$ such that all $f_i(t)$ are prime, we may take the average 
number of times this variable is equal to 1. Let us choose $a$ so that all 
$f_i(t)>1$ for $t\ge a$. Then, as $t$ goes from $a$ to $x$, we have
\begin{equation}\label{eq:naive}
{\rm E}\left(\sum_{t=a}^x\eta(t)\right) = \sum_{t=a}^x {\rm E}(\eta(t)) = 
\sum_{t=a}^x P(t) \approx \int_a^x P(t)dt = \int_a^x\frac{dt}{\prod_{i=1}^k \ln f_i(t)}.
\end{equation}

We cannot present any profound reasons for considering $f_1(t),\ldots,f_k(t)$ as 
independent for any given $t$, but at least this assumption stands the test of a 
great number of experiments. However, the same is not true when we vary the variable $t$.
Therefore, a correcting term may be needed, and this is the constant $C(f)$.

First, if $f(t_0)\equiv 0$ mod~$(p)$ for some integer $t_0$ and prime $p$, then 
$f(t)\equiv 0$ mod~$(p)$ for all $t\equiv t_0$ mod~$(p)$. We would like to avoid
the situation when $f(t)$ is divisible by $p$ (or, equivalently, at least one of
$f_i(t)$ is divisible by $p$). The ``probability'' of the opposite event is 
$a_p=1-\frac{\omega_f(p)}{p}$.

Second, the probability that a ``randomly chosen'' $k$-tuple of integers (whatever that
means) does not contain any element divisible by $p$ is 
$b_p=\left(1-\frac{1}{p}\right)^k$. The ratio $a_p/b_p$ used in the
product (\ref{eq:BH-C}) resembles the conditional probability, though it is not one
since it may well be $>1$.

What remains is to assemble different parts of this Lego, but the corresponding procedure
will need a long discussion and a self-coherent construction of a ``probabilistic model''
of what takes place, so we stop here. Anyway, we are not supposed to give a proof of the
BHC; we only provide some plausible speculations on the matter. ``The proof of the pudding 
is in its eating'': the conjecture works well, even surprisingly well, and this is what
is important about it.

The BHC is a statement involving a $k$-tuple of polynomials $f_1,\ldots,f_k$. In what follows
we will work with a single polynomial, so that $k=1$, and $f_1$ will from now on be denoted
by $f$. There is no contradiction with the previous notation where $f$ denoted the product
of the polynomials $f_i$.

%%%%%%%%%%%%

\section{The constant $C(f)$}\label{sec:const_C}

Computing the constant $C(f)$ is a challenging problem in itself. As already mentioned
above, the mere existence of a limit is a non-trivial fact. By the way, the convergence
is not absolute: by changing the order of factors we may get a different limit value.
This is perhaps one of the manifestations of the fact that the ``probabilistic measure" on
$\mathbb{N}$ represented by the density is finitely additive but not countably additive.
To make matters worse, the rate of convergence is, as one of our colleagues has put it,
``frustratingly slow''.

In Section \ref{sec:omega} we discuss the computation of $\omega_f(p)$ for a single
quadratic polynomial $f$. We will see that it involves rather subtle number-theoretic
methods, mainly the quadratic reciprocity law. The case of cubic polynomials is
treated in \cite{ShL}.

A highly advanced method, though still for a single polynomial, was proposed by 
H.~Cohen~\cite{Coh}. For a quadratic polynomial it involves the techniques 
of $L$-functions and, in particular, of the Riemann $\zeta$-function. For polynomials 
of degree greater than 2 one also needs to know the Galois group of the polynomial
in question as well as the irreducible representations of this group. An intermediate 
way is chosen by Li \cite{W.Li} (with a reference to K.~Conrad): he also uses 
$L$-functions but in a simpler way than in~\cite{Coh}.

We have no intention to compete with the above specialists. Therefore, we have computed 
the products only over primes $p\le 10^8$. The constants $C(f)$ thus obtained already
give excellent results in approximating the numbers of prime values of the polynomials 
we study in this paper.

Another interesting question is, how large (or how small)
the constant $C(f)$ could be. For example, for the well-known Euler polynomial
$t^2+t+41$ (taking prime values for $t=0,1,2,\ldots,39$) the constant found in
\cite{Coh} is $3.31977318$, while for the polynomial $t^2+t+75$ it is $0.31097668$
(in fact, the author gives 39 correct digits for both numbers). Since the integrals
$\displaystyle \int_2^x\frac{dt}{\ln f(t)}$ for these two polynomials 
are very close to each other for large $x$, we conclude that the first polynomial produces, 
approximately, $10.7$ times as many primes as the second one.

In \cite{AFG}, an example of a polynomial $f(t)=t^2+t+a$ is given, with the constant
$5.4972\ldots$. In this example, the coefficient $a$ is a 219-digit number. This number
is not prime, and the discriminant is not prime either. We did not try to factorize
them.

A more systematic search was carried out in \cite{JW} (which contains many interesting 
examples) and \cite{Rivin} (which is, mostly, an experimental work). The champion, 
found by Rivin, among the {\em monic quadratic}\/ polynomials is $t^2-2619t+1291$, 
with the corresponding constant equal to $6.3722$. The discriminant of this polynomial 
is $6\,853\,997$, which is a prime. 
The author has also found a monic polynomial of degree 6 with a yet greater constant $C(f)$; 
the discriminant of the polynomial in question is $53\times\mbox{(a prime with 37 digits)}$. 
The data based on a large random sample of monic polynomials $f$ with their coefficients 
bounded by a large constant $N$ suggest that $C(f)$ obeys the log-normal distribution. 
If this observation turns out to be true then, in general, $C(f)$ is unbounded. Other 
plausible observations based on the experimental data are as follows: (a) the mean 
value of $C(f)$ is 1; (b) for monic polynomials whose coefficients (other than the 
leading one) are bounded by $N$, the maximum value of $C(f)$ grows like 
$C(f)=O(\log\log N)$. At the same time, in \cite{JW} it is conjectured that among the
polynomials of the type $f(t)=t^2+t-a$ the maximum value of $C(f)$ is equal to
$5.65726388$, and it is attained for the 71-digit number
{\small
\[
a = 33\,251\,810\,980\,696\,878\,103\,150\,085\,257\,129\,508\,857\,312\,847\,751\,498\,190\,
{349\,983\,874\,538\,507\,313.}
\]}
\indent Finally, an example of a non-monic polynomial (also from \cite{Rivin}): taking $f(t)=At^2+1$ 
with $A=\prod_{i=1}^{30}p_i$ the product of the first 30 primes from $p_1=2$ to
$p_{30}=113$, we get a rather large constant $C(f)\approx 9.5$. There is no mystery: 
all the factors $(1-1/p)^{-1}$ in (\ref{eq:BH-C}) are greater than~1, while 
$\omega_f(p)=0$ for $p=2,3,\ldots,113$ (in fact, also for $p=127$ but not for 131), 
making the factors $(1-\omega_f(p)/p)$ equal to~1, so that they do not compensate for 
a steadily growing product which reaches, at this initial stage, approximately $8.78$.

%%%%%%%%%%%%%%%%

\section{Block designs}\label{sec:designs}

Here, in order to provide motivation for our particular choice of polynomials $f$, we briefly summarise the construction in~\cite{ADP} of block designs requiring certain polynomials to take prime power values. Readers who are interested only in the number-theoretic aspects of this problem can safely omit this section.

A 2-$(v, k, \lambda)$ {\em design\/} $\mathcal D$ consists of a set $\mathcal P$ of $v$ points, together with a set $\mathcal B$ of $k$-element subsets of $\mathcal P$ called blocks, such that each pair of points lie in exactly $\lambda$ blocks. (This implies that each point lies in the same number of blocks.) The {\em automorphisms\/} of $\mathcal D$ are the permutations of $\mathcal P$ which leave the set $\mathcal B$ invariant; they form a group ${\rm Aut}\,{\mathcal D}$.

If a subgroup $G\le{\rm Aut}\,{\mathcal D}$ acts transitively on blocks then it also acts transitively on points. The latter action could be imprimitive, leaving invariant a partition $\mathcal C$ of $\mathcal P$ with $d\ge 2$ classes, each of size $c\ge 2$, so that $cd=v$. Delandtsheer and Doyen showed in~\cite{DD} that in this case there exist positive integers $m$ and $n$ such that
\[mc+n=\binom{k}{2}=nd+m.\]
These integers $m$ and $n$ are the Delandtsheer-Doyen parameters of $\mathcal D$, with $n$ and $mc$ the numbers of unordered pairs of points in any given block, lying in the same or in different classes of $\mathcal C$.

In~\cite{ADP}, Amara, Devillers and Praeger have explored the restrictions these parameters place on subgroups $G$ of ${\rm Aut}\,\mathcal D$. Let $K$ denote the permutation group of degree $d$ induced by $G$ on the set of classes in $\mathcal C$, and let $H$ be the permutation group of degree $c$ induced on any class in $\mathcal C$ by its setwise stabiliser in $G$, so that $G$ is embedded in the wreath product $H\wr K\le {\rm S}_c\wr{\rm S}_d$. The {\sl rank\/} ${\rm Rank}(X)$ of any transitive permutation group $X$ on a set $\Omega$ is the number of orbits of a point-stabiliser $X_{\alpha}\;(\alpha\in\Omega)$, or equivalently of $X$ on $\Omega\times\Omega$; similarly, the {\em pair-rank\/} ${\rm PairRank}(X)$ is the number of orbits of $X$ on unordered pairs of distinct elements of $X$, so that $({\rm Rank}(X)-1)/2\le{\rm PairRank}(X)\le{\rm Rank}(X)-1$. The main result of~\cite{ADP} is that in the above circumstances
\[\frac{{\rm Rank}(H)-1}{2}\le{\rm PairRank}(H)\le n \quad\hbox{and}\quad \frac{{\rm Rank}(K)-1}{2}\le{\rm PairRank}(K)\le m.\] 

The authors of~\cite{ADP} give several constructions of designs $\mathcal D$ in which the ranks and pair-ranks of $H$ and $K$ attain these upper bounds. One construction requires {\em useful pairs} of integers $n, c$ with the properties that $n\ge 2$ and $c$ is a prime power such that
\[c\equiv 1\, {\rm mod}\,(2n)\quad\hbox{and}\quad c+n=\binom{k}{2}\;\,
\hbox{for some integer}\;\, k\ge 2n.\]
They need $c$ to a prime power in order to define $H$ to be the unique subgroup of index $n$ in ${\rm AGL}_1(c)$, acting naturally on the field ${\mathbb F}_c$, while they take $K={\rm S}_d$ acting naturally on ${\mathbb Z}_d$, so that $G:=H\wr K$ has a transitive but imprimitive induced action on ${\mathcal P}={\mathbb F}_c\times{\mathbb Z}_d$ with $d$ classes of size $c$. By taking $d=1+\frac{c-1}{n}$ (the number of orbits of $H$ on ${\mathbb F}_c$) and defining ${\mathcal B}$ to be the set of images under $G$ of a carefully-chosen $k$-element subset $B\subset{\mathcal P}$ they obtain a 2-$(cd, k, \lambda)$ design $\mathcal D$ for some $\lambda$, admitting $G$ as a block-transitive and point-imprimitive group of automorphisms. This design has Delandtsheer--Doyen parameters $m=1$ and $n$, with ${\rm Rank}(H)={\rm PairRank}(H)+1=n+1$ and ${\rm Rank}(K)={\rm PairRank}(K)+1=2$.

The conditions for the pair $n, c$ to be useful imply that, if $r$ denotes the least positive remainder of $k$ mod~$(4n)$, then $\binom{r}{2}\equiv\binom{k}{2}\equiv n+1$ mod~$(2n)$. Thus, for fixed positive integers $n\ge 2$ and $r<4n$ with $\binom{r}{2}\equiv n+1$ mod~$(2n)$ they need integers $k=4nt+r$ for some integer $t\ge 0$ such that
\[f_{n,r}(t):=\binom{k}{2}-n=8n^2t^2+2n(2r-1)t+\left(\frac{r(r-1)}{2}-n\right)\]
is a prime power $c$. If the polynomial $f_{n,r}$ takes prime power values for infinitely many integers $t\ge 0$ then this construction yields an infinite family of block designs with the required parameters and symmetry properties.

\begin{exam} 
The smallest useful pair $(n,c)$ is $(2,13)$, with $r=k=6$ and $d=7$, so that the corresponding design $\mathcal D$ has $cd=91$ points and $|G|=78^7\cdot 7!$ automorphisms. This example arises from the polynomial $f_{n,r}(t)=f_{2,6}(t)=32t^2+44t+13$ taking the value $c=13$ at $t=0$. Note that $f_{2,6}(1)=89$ is prime, giving a design on $cd=89\cdot 45=4005$ points, whereas $f_{2,6}(2)=697=17\cdot 41$ is not a prime power and therefore does not correspond to a design in this family.

The smaller polynomial $f_{2,3}(t)=32t^2+20t+1$ has its first prime power value $f_{2,3}(1)=53$, giving a design on $53\cdot27=1431$ points. 
\end{exam}

Note that, although this construction of block designs applies to any integer $t\ge 0$ such that $f_{n,r}(t)$ is a prime power, the number-theoretic conjectures and estimates we use are stated in terms of integers $t\ge 1$. This is not a problem here, since we are not concerned with individual block designs but with the existence or otherwise of infinite families of them. In any case, the value $f_{n,r}(0)=\frac{r(r-1)}{2}-n$ is easily dealt with (see Section~\ref{t=0}).

%%%%%%%%%%%%%%%%

\section{Verifying the Bunyakovsky conditions}\label{sec:verifying}

The polynomials $f$ of interest in~\cite{ADP}, and hence the main focus of this note, are those of the form
\begin{equation}\label{eq:f}
f(t)=f_{n,r}(t)=8n^2t^2+2n(2r-1)t+\left(\frac{r(r-1)}{2}-n\right)
\end{equation}
for integers $n\ge 2$ and $r\ge 1$ with
\begin{equation}\label{eq:conditions}
r<4n\quad \hbox{and} \quad \frac{r(r-1)}{2}\equiv n+1\; \hbox{mod}\,(2n).
\end{equation}
Note that this last condition implies that $r\ge 3$.

\begin{lemm}\label{le:abc}
If a polynomial $f=f_{n,r}$ of the form~(\ref{eq:f}) satisfies~(\ref{eq:conditions}), it also satisfies Bunyakovsky's conditions~(a) and (c); it satisfies his condition~(b) if and only if $n$ is not a triangular number $a(a+1)/2$, $a\in{\mathbb N}$.
\end{lemm}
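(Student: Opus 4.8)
The plan is to verify the three conditions in turn, noting that the heart of the matter is condition~(b). Condition~(a) is immediate, since the leading coefficient $8n^2$ is positive for every $n\ge 2$. For condition~(c), the key preliminary observation is that the congruence in~(\ref{eq:conditions}) forces $f(0)=\frac{r(r-1)}{2}-n\equiv 1\pmod{2n}$, so that $f(0)$ is coprime to $2n$, and in particular is odd and coprime to $n$. I would then argue by cases on a prime $p$: if $p\mid 2n$ then, since the two higher coefficients $8n^2$ and $2n(2r-1)$ are divisible by $2n$, we have $f(t)\equiv f(0)\equiv 1\pmod p$ for all $t$, so $f$ is certainly not identically $0$ modulo $p$; while if $p\nmid 2n$ then $p$ is odd and the leading coefficient $8n^2$ is invertible mod $p$, so $f$ reduces to a genuine quadratic over $\mathbb{F}_p$ with at most $2<p$ roots. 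Either way $\omega_f(p)<p$, which is exactly condition~(c).

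For condition~(b) the plan is first to record that $f$ is primitive: its content divides both $f(0)$ and $8n^2$, and since every prime dividing $8n^2$ divides $2n$ while $\gcd(f(0),2n)=1$, the content is $1$. By Gauss's Lemma, a primitive quadratic is reducible in $\mathbb{Z}[t]$ precisely when it has a rational (hence integer) root, i.e.\ precisely when its discriminant is a perfect square. So condition~(b) is equivalent to the discriminant of $f$ not being a square.

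The one genuine computation is the discriminant itself. Writing $a=8n^2$, $b=2n(2r-1)$ and $c=\frac{r(r-1)}{2}-n$, a short expansion gives $b^2-4ac=4n^2(8n+1)$; pleasantly, the dependence on $r$ cancels, so irreducibility is governed by $n$ alone. Since $4n^2=(2n)^2$ is already a square, the discriminant is a square if and only if $8n+1$ is a square, and the classical identity $8\cdot\frac{a(a+1)}{2}+1=(2a+1)^2$ shows that $8n+1$ is a perfect square exactly when $n$ is a triangular number. Hence $f$ is irreducible if and only if $n$ is not triangular, which is the claimed equivalence. I expect no serious obstacle here: the only points needing care are getting the discriminant algebra right and invoking the square-versus-triangular correspondence in both directions; the subtler number theory (quadratic reciprocity, the computation of $\omega_f(p)$ for the Bateman--Horn constant) enters only later and is not needed for this lemma.
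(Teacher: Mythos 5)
Your proposal is correct and takes essentially the same approach as the paper: condition (a) from the positive leading coefficient, condition (c) by the identical case split on $p\mid 2n$ (where $f$ reduces to the constant $1$) versus $p\nmid 2n$ (where $f$ has at most two roots mod $p$), and condition (b) via the discriminant computation $\Delta=4n^2(8n+1)$ and the correspondence between squares $8n+1$ and triangular $n$. Your explicit primitivity check before invoking Gauss's Lemma is a small refinement the paper omits (it states directly that a quadratic is reducible over $\mathbb{Z}$ iff its discriminant is a square); the only slip is the parenthetical claim that a rational root must be an integer, which is false for non-monic quadratics but never used in your argument.
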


\begin{proof} Clearly $f$ satisfies condition~(a) since $n\ge 1$. As a quadratic polynomial, $f$ is reducible over~$\mathbb Z$ if and only if its discriminant $\Delta$ is a perfect square. Here
\begin{equation}\label{eq:Delta}
\Delta=4n^2(2r-1)^2-32n^2\left(\frac{r(r-1)}{2}-n\right)=4n^2(8n+1),
\end{equation}
and this is a square if and only if $8n+1$ is. Simple algebra shows that the solutions $n\in{\mathbb N}$ of $8n+1=l^2\;(l\in{\mathbb Z})$ are the triangular numbers $n=1, 3, 6, 10,\ldots$, those of the form $a(a+1)/2$ for some $a=(l-1)/2\in{\mathbb N}$ (readers may enjoy finding a geometric `proof without words' for this), so $f$ will satisfy~(b) if and only if $n$ does not have this form. 

We now check condition~(c). If a prime $p$ divides $2n$ then $f$ reduces mod~$(p)$ to  a constant polynomial; this takes the value $1$ since $r(r-1)/2\equiv n+1$ mod~$(2n)$, so $f$ is not identically zero mod~$(p)$. If $p$ does not divide $2n$ then $f$ reduces to a quadratic polynomial, with at most two roots, so again it cannot be identically zero.
\end{proof}

In order to apply the Bateman--Horn Conjecture to the polynomials $f_{n,r}$, we therefore restrict attention to those for which $n$ is not a triangular number.

%%%%%%%%%%%%

\section{Calculating $\omega_f(p)$ for $f_{n,r}$}\label{sec:omega}

Recall that $\omega_f(p)$, which appears in the infinite product (\ref{eq:BH-C}), is the number of roots of $f$ mod~$(p)$ for each prime $p$. We saw in the proof of Lemma~\ref{le:abc} that $\omega_f(p)=0$ for any prime $p$ dividing $2n$. Primes $p$ dividing $8n+1$ (and thus not dividing $2n$) give $\Delta\equiv 0$ mod~$(p)$ by (\ref{eq:Delta}), and hence $\omega_f(p)=1$ by the quadratic formula. Similarly, all other primes $p$ give $\omega_f(p)=2$ or $0$ as $8n+1$ is or is not a quadratic residue (non-zero square) mod~$(p)$.

In general, given any prime $p$ and integer $q$, one can determine whether or not $q$ is a quadratic residue mod~$(p)$ by using the Legendre symbol
\[\left(\frac{q}{p}\right)=
\begin{cases}
0\quad\; \hbox{if $q\equiv 0$ mod~$(p)$;}\\
1\quad\; \hbox{if $q$ is a quadratic residue mod~$(p)$;}\\
-1 \;\; \hbox{otherwise.}
\end{cases}.\]
(See~\cite[Chapter~7]{JJ} for quadratic residues and the Legendre symbol.) Clearly
\[\left(\frac{q}{p}\right)=\left(\frac{q'}{p}\right)\quad \hbox{if $q\equiv q'$ mod~$(p)$,}\]
and since the quadratic residues form a subgroup of index~$2$ in the group of units mod~$(p)$ we have the multiplicative property, that
\[\left(\frac{qq'}{p}\right)=\left(\frac{q}{p}\right)\left(\frac{q'}{p}\right)\]
for all $q, q'\in{\mathbb Z}$. Using these rules one can reduce the calculation of the Legendre symbol to the cases where $q$ is an odd prime. In such cases one can use the Law of Quadratic Reciprocity, that if $p$ and $q$ are distinct odd primes then
\[\left(\frac{q}{p}\right)=\left(\frac{p}{q}\right)\quad \hbox{if $p$ or $q\equiv 1$ mod~$(4)$,}\]
while
\[\left(\frac{q}{p}\right)=-\left(\frac{p}{q}\right)\quad \hbox{if $p\equiv q\equiv -1$ mod~$(4)$.}\]
We also have
\[\left(\frac{2}{p}\right)=1\;\hbox{or}\;-1\quad\hbox{as}\quad p\equiv\pm 1\;\hbox{or}\;\pm 3\;{\rm mod}\,(8),\]
and
\[\left(\frac{-1}{p}\right)=1\;\hbox{or}\;-1\quad\hbox{as}\quad p\equiv 1\;\hbox{or}\;-1\;{\rm mod}\,(4).\]
By iterating these rules one can reduce the values of $p$ and $q$ until they are small enough to be dealt with by inspection.

We have seen that if $f=f_{n,r}$ then $\omega_f(p)=0$ for all primes $p$ dividing $2n$. For primes $p$ not dividing $2n$, by the definitions of the function $\omega_f$ and the Legendre symbol, the quadratic formula gives $\omega_f(p)=\left(\frac{\Delta}{p}\right)+1$. We will use this in the following examples.

\begin{exam}\label{ex:n=2}
The smallest value of $n$ which is not a triangular number is $n=2$, giving $8n+1=17$. Since $17\equiv 1$ mod~$(4)$ we have
\[\left(\frac{17}{p}\right)=\left(\frac{p}{17}\right)\]
for any odd prime $p$. By squaring integers one sees that the quadratic residues mod~$(17)$ are $\pm 1,\pm 2, \pm 4$ and $\pm 8$ (in fact, under multiplication mod~$(17)$ they form a cyclic group of order $8$, generated by $2$). Thus, if $f=f_{2,r}$ for some $r$ then $\omega_f(p)=2$ for odd primes $p\equiv\pm 1,\pm 2, \pm 4$ or $\pm 8$ mod~$(17)$, while $\omega_f(p)=0$ for primes $p\equiv\pm 3,\pm 5, \pm 6$ or $\pm 7$ mod~$(17)$. For the remaining primes $p$ we have $\omega_f(2)=0$ and $\omega_f(17)=1$.
\end{exam}

\begin{exam}\label{ex:n=4}
The second smallest value of $n$ which is not a triangular number is $n=4$, giving $8n+1=33$. In this case multiplicativity and quadratic reciprocity give
\[\left(\frac{33}{p}\right)=\left(\frac{3}{p}\right)\left(\frac{11}{p}\right)=\left(\frac{p}{3}\right)\left(\frac{p}{11}\right)\]
for all odd primes $p\ne 3, 11$, since $3\equiv 11$ mod~$(4)$ so that any minus signs cancel. Now the quadratic residues mod~$(3)$ and mod~$(11)$ are $1$ and $1, 3, 4, 5, 9$ respectively.
The primes $p$ for which $33$ is a quadratic residue mod~$(p)$ are those which are both residues or both non-residues mod~$(3)$ and mod~$(11)$, so solving the relevant pairs of simultaneous congruences gives the classes $\pm1, \pm 2, \pm 4, \pm 8, \pm 16$ mod~$(33)$ (forming a cyclic group generated by $2$). If $f=f_{4,r}$ for some $r$ then for odd primes $p$ in these classes we have $\omega_f(p)=2$, whereas for $p\equiv\pm 5, \pm 7, \pm 10, \pm 13, \pm 14$ mod~$(33)$ we have $\omega_f(p)=0$. For the remaining primes $p$ we have $\omega_f(2)=0$ and $\omega_f(3)=\omega_f(11)=1$.

Notice that the classes $\pm 3,\pm 6,\pm 9,\pm 12,\pm 15$ and $\pm 11$ are 
not present in the above two lists: they are not coprime with~$33$ and therefore cannot 
be residues of a prime $p>11$ modulo $33$.
\end{exam}

\begin{exam}\label{ex:n=5}
The next case $n=5$ is similar to Example~\ref{ex:n=2} since $8n+1=41$ is prime. 
We find that $\omega_f(2)=\omega_f(5)=0$ and $\omega_f(41)=1$, while for other primes $p$ we have $\omega_f(p)=2$ or $0$ as $p$ is or is not a quadratic residue mod $(41)$. These are
$\pm 1, \pm 2, \pm 4, \pm 5, \pm 8, \pm 9, \pm 10, \pm 16, \pm 18, \pm 20$.
\end{exam}

For other permitted values of $n$ the process is similar: thus for $n=7$, $8$ and $9$ we have $8n+1=57=3\cdot 19$, $65=5\cdot 13$ and $73$ which is prime. However, in some cases the process can be lengthier, depending on the factorisation of $8n+1$. We give just one more typical example.

\begin{exam}\label{ex:n=13}
If $n=13$ then $8n+1=105=3\cdot 5\cdot 7$. Since $3\equiv 7\equiv -1$ mod~$(4)$ while $5\equiv 1$ mod~$(4)$ we have
\[\left(\frac{105}{p}\right)=\left(\frac{3}{p}\right)\left(\frac{5}{p}\right)\left(\frac{7}{p}\right)
=\left(\frac{p}{3}\right)\left(\frac{p}{5}\right)\left(\frac{p}{7}\right)\]
for all primes $p\ge 11$, so for such $p$ we have $\omega_f(p)=0$ or $2$ as $p$ is a quadratic residue modulo an even or odd number of the primes $3, 5$ and $7$. Since the quadratic residues modulo these primes are generated by $1$, $-1$ and $2$ respectively, this is easily determined in terms of congruences mod~$(105)$. (For some primes $p$, short-cuts are possible: for instance $105\equiv -1$ mod~$(53)$, so $\left(\frac{105}{53}\right)=\left(\frac{-1}{53}\right)=1$, and similarly $\left(\frac{105}{107}\right)=\left(\frac{-1}{107}\right)\left(\frac{2}{107}\right)=(-1)^2=1$.) For $p=3$, $5$ or $7$ we have $\omega_f(p)=1$, while $\omega_f(2)=\omega_f(13)=0$.
\end{exam}

Since the values of $\omega_f(p)$ depend only on a few simple congruences for $p$, it is straightforward to program Maple to determine the factors in the infinite product (\ref{eq:BH-C}) and hence to evaluate $C$. Note also that this part of the process depends only on $n$, and not on $r$, so that polynomials $f_{n,r}$ with the same parameter $n$ can be dealt with simultaneously.

%%%%%%%%%%%%%%%%%

\section{Evaluating the estimates}\label{sec:evaluating}

Since the factors in (\ref{eq:BH-C}) approach $1$ quite slowly as $p\to\infty$, 
convergence of this infinite product is rather slow, and one needs to multiply 
many terms in order to obtain good approximations for $C$. In our computations 
we used all the primes $p\le10^8$.

Maple calculates the definite integral in (\ref{eq:BH-Q}) by numerical quadrature. We found that running times were less than a second. Bateman and Horn simplified this part of the process by replacing $\ln(f(t))$ with $\deg(f)\ln(t)$ in (\ref{eq:BH-Q}), thus ignoring the leading coefficient of $f$ together with all non-leading terms. No doubt, working in the early 1960s without resources such as Maple, they found that this shortcut was essential, especially in cases involving more than one polynomial. Li's recent improvement~\cite{W.Li}, using $\ln(f(t))$, certainly leads to more accurate estimates. In fact, the non-leading terms have remarkably little effect on the value of the integral (so again, $r$ is almost irrelevant), whereas most of the extra accuracy comes from including the leading coefficient. For instance, the estimates $E(10^8)$ for the polynomials $f_{2,3}(t)=32t^2+20t+1$ and $f_{2,6}(t)=32t^2+44t+13$, given in the next section, differ by only $0.29$.

For each polynomial $f$ we used Maple to find the actual number $Q(x)$ of prime values of $f(t)$ for $t\le x=10^8$. Since, for example, $f_{5,r}(10^8)\approx 2\cdot10^{18}$, this was the most time-consuming part of our computations, with running times of about two hours on a modest laptop. Maple uses the Rabin--Miller primality test, which is probabilistic rather than deterministic. If an integer is prime, the test will always declare it to be prime. If an integer is composite, the test may incorrectly declare it to be prime, but the probability of this happening is so small that in forty years of use of the test, no such incident has ever been reported. In our case, we found so many prime values of the polynomials $f$ which we considered that, even if we have been very unlucky and a few of them are actually composite, this will have a negligible effect on our evidence. 

%%%%%%%%%%%%

\section{The estimates and their accuracy}\label{sec:estimates}

\subsection{The case $n=2$.} The smallest allowed value for the parameter $n$ is $2$, so condition~(\ref{eq:conditions}) implies that $r=3$ or $6$. 
In either case, evaluating $\omega_f(p)$ as in Example~\ref{ex:n=2}, and taking the product 
in (\ref{eq:BH-C}) over all primes $p\le 10^8$, we found that $C=4.721240276\ldots$. Putting 
$r=3$ gives
\[f(t)=f_{2,3}(t)=32t^2+20t+1.\]
Taking $x=10^i$ for $i=3, 4, \ldots, 8$ we found the estimates $E(x)$ for the values of $Q(x)$ shown in Table~\ref{tab:n=2,r=3}. The final column, showing the relative error, reveals the accuracy of these estimates. 

\begin{table}[htbp]
\begin{center}
\begin{tabular}{c|c|c|c}
$x$	    & $Q(x)$  & $E(x)$		 	& relative error \\
\hline
$10^3$	& 326		 		& $314.49$			& $-3.53\%$ \\
$10^4$	& 2421		 		& $2404.86$			& $-0.67\%$ \\
$10^5$	& 19\,394			& $19\,438.26$		& $0.23\%$  \\
$10^6$	& 162\,877			& $163\,182.75$		& $0.19\%$  \\
$10^7$	& 1\,405\,448		& $1\,406\,630.14$	& $0.084\%$ \\
$10^8$ & 12\,357\,532		& $12\,362\,961.06$ & $0.044\%$
\end{tabular}
\end{center}
\caption{Numbers $Q(x)$ and estimates $E(x)$ for $f_{2,3}$.}
\label{tab:n=2,r=3}
\end{table}

\subsection{The cases with $n\le 9$}

\begin{table}[htbp]
\begin{center}
\begin{tabular}{c|c|c|c|c|c}
$(n,r)$  & $f_{(n,r)}(t)$    & $C(f)$    & $Q(10^8)$      & $E(10^8)$      & relative error \\
\hline\hline
$(2,3)$  & $32t^2+20t+1$     & $4.72124$ & $12\,357\,532$ & $12\,362\,961.06$ & $0.0439\%$  \\
$(2,6)$  & $32t^2+44t+13$    &           & $12\,363\,849$ & $12\,362\,960.77$ & $-0.0072\%$ \\
\hline
$(4,7)$  & $128t^2+104t+17$  & $3.20688$ & $8\,100\,174$  & $8\,102\,333.64$  & $0.0267\%$  \\
$(4,10)$ & $128t^2+152t+41$  &           & $8\,104\,531$  & $8\,102\,333.57$  & $-0.0271\%$ \\
\hline
$(5,4)$  & $200t^2+70t+1$    & $5.62398$ & $14\,052\,016$ & $14\,050\,339.22$ & $-0.012\%$  \\
$(5,9)$  & $200t^2+170t+31$  &           & $14\,049\,951$ & $14\,050\,339.05$ & $0.003\%$   \\
$(5,12)$ & $200t^2+230t+61$  &           & $14\,057\,558$ & $14\,050\,338.95$ & $-0.051\%$  \\
$(5,17)$ & $200t^2+330t+131$ &           & $14\,049\,868$ & $14\,050\,338.79$ & $0.003\%$   \\
\hline
$(7,9)$  & $392t^2+238t+29$  & $3.82010$ & $9\,381\,546$  & $9\,385\,428.26$  & $0.0415\%$  \\
$(7,13)$ & $392t^2+350t+71$  &           & $9\,387\,937$  & $9\,385\,428.21$  & $-0.0267\%$ \\
$(7,16)$ & $392t^2+434t+113$ &           & $9\,385\,853$  & $9\,385\,428.17$  & $-0.0045\%$ \\
$(7,20)$ & $392t^2+546t+183$ &           & $9\,387\,135$  & $9\,385\,428.11$  & $-0.0182\%$ \\
\hline
$(8,15)$ & $512t^2+464t+97$  & $3.22754$ & $7\,879\,429$  & $7\,877\,750.61$  & $-0.0213\%$ \\
$(8,18)$ & $512t^2+560t+145$ &           & $7\,879\,013$  & $7\,877\,750.57$  & $-0.0160\%$ \\
\hline
$(9,5)$  & $648t^2+162t+1$   & $5.41032$ & $13\,129\,138$ & $13\,129\,743.85$ & $0.0046\%$  \\
$(9,8)$  & $648t^2+270t+19$  &           & $13\,127\,661$ & $13\,129\,739.69$ & $0.0158\%$  \\
$(9,17)$ & $648t^2+594t+127$ &           & $13\,129\,080$ & $13\,129\,739.55$ & $0.0050\%$  \\
$(9,20)$ & $648t^2+702t+181$ &           & $13\,130\,890$ & $13\,129\,743.63$ & $-0.0087\%$ \\
$(9,29)$ & $648t^2+1026t+397$ &          & $13\,128\,036$ & $13\,129\,743.50$ & $0.0130\%$  \\
$(9,32)$ & $648t^2+1134t+487$ &          & $13\,128\,979$ & $13\,129\,743.46$ & $0.0058\%$
\end{tabular}
\end{center}
\smallskip
\caption{Complete list of irreducible polynomials $f_{n,r}$ defined in (\ref{eq:f}) and satisfying 
conditions (\ref{eq:conditions}), for $n\le 9$. The constants $C(f)$ are computed over 
primes $p\le 10^8$.}
\label{tab:up-to-n=9}
\end{table}

The process for the remaining polynomials $f_{n,r}$ with non-triangular numbers $n\le 9$ was similar, with $x=10^8$ in all cases. Table~\ref{tab:up-to-n=9} summarises the results.

%%%%%%%%%

\begin{rema}
The greater the leading coefficient of a polynomial, the more 
significant is Li's improvement in~\cite{W.Li} as compared with the initial 
Bateman--Horn formula in~\cite{BH}. For example, in the case of $f(t)=f_{9,5}(t)=648t^2+162t+1$, 
 we have two corresponding estimates
\[E_{\rm Li} = C\cdot\int_2^{10^8}\frac{dt}{\ln(f(t))}
\quad\hbox{and}\quad
E_{\rm BH} = \frac{C}{2}\cdot\int_2^{10^8}\frac{dt}{\ln(t)}\]
for $x=10^8$, with relative errors $0.0046\%$ and $18.7\%$ respectively.

This does not contradict the fact that the two estimates are asymptotically
equivalent. Indeed, the relative error of $E_{\rm BH}$ steadily decreases to 
approximately $2\%$ when the upper limit $x$ of the integration approaches $10^{70}$. 
(Of course, we did not count the true number $Q(x)$ of prime values of this
polynomial: instead, we took $E_{\rm Li}(x)$ as if it were the true 
value of $Q(x)$.)
\end{rema}

%%%%%%%%%%%%%%%%%

\section{Prime power values}\label{sec:powers}

We restricted our estimates to prime values of the polynomials $f_{n,r}$, since the Bunyakovsky and Bateman--Horn Conjectures have nothing to say about composite values. However, since the constructions of block designs in~\cite{ADP} apply to values which are prime powers, not just primes, we extended our computer searches to proper prime power values of some of these polynomials, for $t\le x=10^7$.

As predicted in Section~\ref{sec:versus}, we found very few proper prime power values, in comparison with the abundance of prime values. 
The values we found for $n\le 9$ and $t\le 10^7$ are shown in Table~\ref{tab:primepowers}. We observe that there is only one cube: all the other prime powers are squares.
The polynomials $f_{n,r}$ for the following pairs $(n,r)$ with non-triangular parameters $n\le 9$ gave no proper prime power values for $t\le 10^7$, so they have been omitted  from the table:
\[(2,6),\; (4,7),\; (4,10),\; (5,9),\; (5,12),\; (5,17),\; (7,9),\; (7,13),\; (7,16),\; (7,20),\]
\[(8,15),\; (8,18),\; (9,8),\; (9,20),\; (9,32). \]

\begin{table}[htbp]
\begin{center}
\begin{tabular}{c|c|c|c|c}
$(n,r)$ & polynomial $f_{n,r}$ & $t\le 10^7$ & $f_{n,r}(t)$ & power \\
\hline
$(2,3)$ & $32t^2+20t+1$ & 2 & 169 & $13^2$ \\
&& 8 & 2\,209 & $47^2$ \\
&& 78 & 196\,249 & $443^2$ \\
&& 282 & 2\,550\,409 & $1\,597^2$ \\
&& 9\,590 & 2\,943\,171\,001 & $54\,251^2$ \\
&& 23\,666 & 17\,923\,019\,113 & $2\,617^3$ \\
&& 90\,372 & 261\,348\,955\,729 & $511\,223^2$ \\
&& $3\,069\,998$ & $301\,596\,468\,440\,089$ & $17\,366\,533^2$ \\
\hline
$(5,4)$ & $200t^2+70t+1$ & 4 & 3\,481 & $59^2$ \\
&& 2\,044 & 835\,730\,281 & $28\,909^2$ \\
&& 4\,816 & 4\,639\,108\,321 & $68\,111^2$ \\
&& 163\,608 & 5\,353\,526\,985\,361 & $2\,313\,769^2$ \\
\hline
$(9,5)$ & $648t^2+162t+1$ & 3\,220 & 6\,719\,244\,841 & $81\,971^2$ \\
\hline
$(9,17)$ & $648t^2+594t+127$ & $1$ & $1\,369$ & $37^2$ \\
&& $49$ & $1\,585\,081$ & $1\,259^2$ \\
\hline
$(9,29)$ & $648t^2+1\,026t+397$ & 2 & 5\,041 & $71^2$ 
\end{tabular}
\end{center}
\caption{Proper prime power values for irreducible polynomials $f_{n,r}$ 
with $n\le 9$, $t\le 10^7$}
\label{tab:primepowers}
\end{table}

%%%%%%%%%%%%%%%%

\section{Prime power values of reducible polynomials}\label{sec:reducible}

A reducible polynomial $f(t)=g(t)h(t)\in{\mathbb Z}[t]$ can take only finitely many prime values (with $g(t)$ or $h(t)$ equal to $\pm 1$), but could it take infinitely many prime power values? One way it might do so is if $g=h$ and this polynomial takes infinitely many prime values: Dirichlet's Theorem shows that this can happen with $\deg g=1$, and the Bunyakovsky Conjecture suggests that it can happen with $\deg g>1$. More generally, if $g$ is irreducible and takes infinitely many prime values $p$, then any power $f=g^e$ of $g$ takes infinitely many prime power values $p^e$. But what happens if $f$ has two or more distinct irreducible factors?

\begin{theo}\label{th:powers}
If $f$ is a polynomial in ${\mathbb Z}[t]$ with at least two different irreducible factors, then $f(t)$ is a prime power for only finitely many $t\in{\mathbb Z}$.
\end{theo}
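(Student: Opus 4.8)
The plan is to split $f$ into two coprime non-constant factors and then exploit the resultant. First I would write $f=g\cdot h$, where $g$ is the full power of one fixed irreducible factor dividing $f$ and $h=f/g$ collects the remaining irreducible factors; both are non-constant polynomials in $\mathbb{Z}[t]$ sharing no common irreducible factor. Since $g$ and $h$ are then coprime in $\mathbb{Q}[t]$, their resultant $R=\mathrm{Res}(g,h)$ is a non-zero integer, and there exist $u,v\in\mathbb{Z}[t]$ with $u(t)g(t)+v(t)h(t)=R$ holding identically, hence for every integer $t$. This B\'ezout-type identity over $\mathbb{Z}[t]$ is the engine of the argument.

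Next, suppose $f(t)=p^m$ is a prime power for some $t\in\mathbb{Z}$ and prime $p$. Then $g(t)$ and $h(t)$ are integers whose product is $p^m$, so $g(t)=\pm p^a$ and $h(t)=\pm p^b$ with $a,b\ge 0$ and $a+b=m$. I would then distinguish two cases. If $a=0$ or $b=0$, then $g(t)=\pm 1$ or $h(t)=\pm 1$; since $g$ and $h$ are non-constant, each such equation has at most $\deg g$ or $\deg h$ solutions, so this case contributes only finitely many $t$, independently of $p$.

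The remaining case is $a,b\ge 1$, so that $p$ divides both $g(t)$ and $h(t)$. Reducing the resultant identity modulo $p$ then forces $p\mid R$, so $p$ lies in the finite set of prime divisors of $R$. For each such fixed $p$, I would apply the $p$-adic valuation $v_p$ to the identity: since $v_p\bigl(u(t)g(t)+v(t)h(t)\bigr)\ge\min\bigl(v_p(g(t)),v_p(h(t))\bigr)$, we obtain $\min(a,b)\le v_p(R)$. Hence one of $|g(t)|,|h(t)|$ equals $p^{\min(a,b)}\le p^{v_p(R)}$, a bound depending only on $p$ and the fixed polynomials. A non-constant polynomial attains any bounded value for only finitely many $t$, so each prime $p\mid R$ contributes finitely many $t$; summing over the finitely many such $p$ and adding the finitely many $t$ from the first case yields the theorem.

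The step I expect to be the crux is the valuation bound. The naive observation that $p\mid R$ only confines $p$ to a finite set and does not by itself prevent infinitely many $t$ (with exponents $m$ growing without bound). It is the resultant identity, through $\min(v_p(g(t)),v_p(h(t)))\le v_p(R)$, that forces one of the two factors to remain bounded and thereby closes the argument; everything else is routine bookkeeping.
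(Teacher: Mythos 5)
Your proof is correct, and it takes a genuinely different route from the paper's. The paper argues analytically: it first treats two coprime factors $g,h$ of equal degree $k\ge 1$, observing that if $f(t)=g(t)h(t)$ were a prime power for infinitely many $t$, then infinitely many of the ratios $g(t)/h(t)$ or $h(t)/g(t)$ would be integers of the form $p^{i-j}$, while $g(t)/h(t)$ tends strictly monotonically to the ratio $a_k/b_k$ of leading coefficients as $t\to+\infty$, and infinitely many distinct integers cannot converge; the general case is reduced to this one by replacing $f=gh$ with $f^*=g^{\deg h}h^{\deg g}$ (which is a prime power at exactly the same $t$ and has equal-degree coprime factors), and negative $t$ are handled by passing to $f(-t)$. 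Your resultant argument is the standard algebraic one: the B\'ezout identity $ug+vh=R$ over $\mathbb{Z}[t]$ says precisely that $\gcd(g(t),h(t))$ divides $R$ for every integer $t$, and your two cases combine into the single clean statement that $\min(|g(t)|,|h(t)|)\le |R|$ whenever $f(t)$ is a prime power, a condition a pair of non-constant polynomials can satisfy at only finitely many $t$. As for what each approach buys: the paper's proof is elementary and self-contained (no resultant or B\'ezout machinery), at the cost of the equal-degree trick and a separate remark for $t<0$; yours treats all $t\in\mathbb{Z}$ and arbitrary degrees uniformly, and is effective, since the exceptional $t$ are bounded explicitly in terms of $R$. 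One shared caveat: both proofs (and the theorem itself) tacitly read ``irreducible factors'' as \emph{non-constant} ones --- your decomposition needs two distinct non-constant irreducibles so that $h$ is non-constant, and the paper's special case likewise requires $\deg g=\deg h\ge 1$; this reading is forced, since for example $2t^2$ has the distinct irreducible factors $2$ and $t$ in $\mathbb{Z}[t]$ yet equals the prime power $2^{2k+1}$ at every $t=2^k$.
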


\begin{proof} We first deal with a simple special case, and with $t\ge 0$. Suppose that $f=gh$ for distinct factors $g(t)=a_kt^k+\cdots$ and $h(t)=b_kt^k+\cdots$ in ${\mathbb Z}[t]$ of the same degree $k\ge 1$.
If there is some $t\in{\mathbb N}$ with $f(t)=p^e$ for a prime $p$ and integer $e\ge1$ then $g(t)=\pm p^i$ and $h(t)=\pm p^j$ for some integers $i, j\ge 0$ with $i+j=e$. If $i\ge j$ then
\[\frac{g(t)}{h(t)}=p^{i-j}\in{\mathbb Z}.\]
However, for all sufficiently large $t\in{\mathbb R}$ we have
\[\frac{g(t)}{h(t)}=\frac{a_kt^k+\cdots}{b_kt^k+\cdots}\to\frac{a_k}{b_k}\quad\hbox{strictly monotonically as}\;\; t\to+\infty,\]
so if there are infinitely many such $t\in{\mathbb N}$ with $i\ge j$ we have a sequence of integers $p^{i-j}$ converging strictly monotonically to $a_k/b_k$, which is impossible. A similar argument, with the factors $g$ and $h$ transposed, shows that there can be only finitely many such $t\in{\mathbb N}$ with $i<j$, so $f(t)$ is a prime power for only finitely many $t\in{\mathbb N}$. 

We can now deal with the general case,  where $f$ is reducible and not a power of a single irreducible polynomial. This allows us to factorise $f$ in ${\mathbb Z}[t]$ as $f=gh$ where $g$ and $h$ have different irreducible factors. If $\deg g\ne\deg h$ we can replace $f$ with
\[f^*=g^*h^*\quad\hbox{where}\quad g^*=g^{\deg h}\quad\hbox{and}\quad h^*=h^{\deg g},\]
so that $f^*$ takes prime power values at the same integers $t$ as $f$ does. Since $g^*$ and $h^*$ are distinct but have the same degree, we can apply the preceding argument to show that $f^*$ takes prime power values at only finitely many $t\in{\mathbb N}$, and hence the same applies to $f$. Finally, we can extend this result to all $t\in{\mathbb Z}$ either directly as above, using the fact that $g(t)/h(t)$ has similar limiting behaviour when $t\to-\infty$, or by applying the above argument for $t>0$ to $f(-t)$, which factorises in the same way as $f(t)$.
\end{proof}

In particular, let $f=f_{n,r}$ in a case where this polynomial is reducible, or equivalently $n$ is a triangular number $a(a+1)/2$ and $\Delta$ is a non-zero square $4n^2(8n+1)=4n^2(2a+1)^2$. Then $f$ factorises in ${\mathbb Z}[t]$ as 
\[f(t)=g(t)h(t)=\frac{1}{2}(4nt+r+a)(4nt+r-a-1),\]
where the first or second displayed linear polynomial has both of its coefficients even as $r\equiv a$ mod~$(2)$ or not, so that it absorbs the factor $\frac{1}{2}$. In either case, the resulting linear factors $g$ and $h$ of $f$ in ${\mathbb Z}[t]$ are distinct and irreducible, so Theorem~\ref{th:powers} implies that $f(t)$ is a prime power for only finitely many $t\in{\mathbb Z}$.

\begin{prop}\label{prop:red}
If $f_{n,r}$ is reducible and $n>1$ then $f_{n,r}(t)$ is not a prime power for any 
integer $t\ge 1$.
\end{prop}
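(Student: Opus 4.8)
\textit{Proof proposal.} The plan is to argue by contradiction, exploiting the explicit factorisation of $f_{n,r}$ displayed just before the proposition. Since $f_{n,r}$ is reducible and $n>1$, we have $n=a(a+1)/2$ with $a\ge 2$, and
\[f_{n,r}(t)=\tfrac12\,u(t)\,w(t),\qquad u(t)=4nt+r+a,\quad w(t)=4nt+r-a-1,\]
so that $u(t)-w(t)=2a+1$. Suppose, for a contradiction, that $f_{n,r}(t)=p^e$ for some prime $p$, some $e\ge 1$, and some integer $t\ge 1$; equivalently $u(t)w(t)=2p^e$. First I would record a size estimate valid for every $t\ge 1$: since $4n=2a^2+2a$ and $r\ge 3$, the smaller factor satisfies $w(t)\ge w(1)=2a^2+a+r-1>4a+2$ (here $a\ge 2$ and $r\ge 3$ are used), while $u(t)>w(t)$. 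Thus both factors are positive and strictly larger than $4a+2=2(2a+1)$.

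Next I would pin down the prime $p$ and how it divides the two factors. The product $u(t)w(t)=2p^e$ has no prime divisors other than $2$ and $p$. I would first rule out $p=2$: in that case $u(t)w(t)=2^{e+1}$, so both $u(t)$ and $w(t)$ are powers of $2$ greater than $1$, hence even, contradicting the fact that their difference $2a+1$ is odd. So $p$ is odd. Because $w(t)>2$, it cannot be a pure power of $2$, so $p\mid w(t)$; the same reasoning gives $p\mid u(t)$. Since $u(t)-w(t)=2a+1$ is odd, exactly one of $u(t),w(t)$ is even, so their greatest common divisor $d=\gcd(u(t),w(t))$ is odd; as $d$ divides $2p^e$ this forces $d=p^m$ with $m=\min(v_p(u(t)),v_p(w(t)))\ge 1$, where $v_p$ denotes the $p$-adic valuation. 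Crucially, $d\mid\bigl(u(t)-w(t)\bigr)=2a+1$, whence $p^m\le 2a+1$.

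The contradiction then comes from a size-versus-divisibility clash. Let $z\in\{u(t),w(t)\}$ be the factor realising the minimal $p$-valuation $m$. Since $z$ has no prime factors besides $2$ and $p$ and $v_p(z)=m$, we have $z=2^c p^m$ with $c\in\{0,1\}$, so $z\le 2p^m\le 2(2a+1)=4a+2$. But $z\ge\min(u(t),w(t))=w(t)>4a+2$, which is impossible. Hence no such $t\ge 1$ exists, proving the proposition.

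I expect the routine part to be the size estimate $w(1)>4a+2$, which merely substitutes $n=a(a+1)/2$ and invokes $a\ge 2$ and $r\ge 3$. The main conceptual step is the last one: recognising that the factor of minimal $p$-valuation is squeezed between the lower bound $w(t)$ coming from the linear growth of the factors and the upper bound $2p^m\le 2(2a+1)$ coming from $\gcd(u,w)\mid(u-w)$. Some care is also needed to track the stray factor $2$ (i.e.\ which of $u,w$ absorbs the $\tfrac12$) and to dispose of the case $p=2$, but both are controlled by the single parity observation that $u-w=2a+1$ is odd.
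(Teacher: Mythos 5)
Your proof is correct, and it takes a genuinely different route from the paper's. The paper splits into two cases according to the parity of $r-a$, so that the factor $\tfrac12$ is absorbed into one of the two linear factors, giving an integer factorisation $f=gh$ with $g(t)=p^i$ and $h(t)=p^j$, $i,j\ge 1$; the contradiction is then extracted from the \emph{ratio} of the factors, which must be an integer power of $p$, so that a size comparison ($g\ge h$, or $g\ge 2h$, or $g\ge 3h$, depending on the subcase) forces $t<1$. You instead keep the symmetric half-integer factorisation $u(t)w(t)=2p^e$ and get the contradiction from the \emph{difference}: $\gcd(u,w)$ is odd (since $u-w=2a+1$ is odd), hence equals $p^m$ with $m\ge 1$, and it divides $u-w=2a+1$; the factor of minimal $p$-adic valuation is then $2^cp^m\le 2(2a+1)=4a+2$, while linear growth gives $w(t)\ge w(1)>4a+2$ for $t\ge 1$. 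All the steps check out (the verification that $w(1)=2a^2+a+r-1>4a+2$ does need $r\ge 3$, which the paper notes follows from (\ref{eq:conditions}), and the paper itself needs $r\ge 3$ in its Case~2). Your version is more uniform, avoiding the parity case split entirely and handling the stray factor of $2$ by the single observation that $u-w$ is odd. What the paper's case analysis buys is reusability: the explicit equations $g(t)=p^i$, $h(t)=p^j$ of its Cases 1 and 2 are quoted verbatim (at $t=0$) in the proof of Proposition~\ref{prop:realize}, which classifies the prime power values $f_{n,r}(0)$ of reducible polynomials; your gcd argument, while cleaner for the proposition at hand, does not set up that later analysis.
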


\begin{proof} Suppose that $f:=f_{n,r}$ is reducible and $n>1$, so $n=a(a+1)/2$ for some integer $a\ge 2$ by Lemma~\ref{le:abc}, and that $f(t)=p^e$ for some prime $p$ and integers $e, t\ge 1$.

\medskip

\noindent{\bf Case 1} If $r\equiv a$ mod~$(2)$ then $f=gh$ where
\[g(t)=2nt+\frac{r+a}{2}\quad\hbox{and}\quad h(t)=4nt+r-a-1.\]
Since $t\ge 1$ we have $g(t), h(t)>1$ so $g(t)=p^i$ and $h(t)=p^j$ for integers $i,j\ge 1$ with $i+j=e$. 
If $i<j$ then
\[\frac{h(t)}{g(t)}=p^{j-i}\ge p\ge2,\]
giving $-a-1\ge a$, which is impossible since $a\ge 1$. Thus $i\ge j$, so $g(t)\ge h(t)$, leading to
\[t\le\frac{3a-r+2}{4n}\le\frac{3a+1}{2a(a+1)}<1\]
(since $r\ge 1$ and $a\ge 2$), against our hypothesis.

\medskip

\noindent{\bf Case 2} If $r\not\equiv a$ mod~$(2)$ then $f=gh$ where
\[g(t)=4nt+r+a\quad\hbox{and}\quad h(t)=2nt+\frac{r-a-1}{2}.\]
As before we have $g(t)=p^i$ and $h(t)=p^j$ for integers $i,j\ge 1$. If $i\le j$ then $g(t)\le h(t)$, leading to\
\[2nt\le \frac{r-a-1}{2}-(r+a)<0,\]
which is impossible. Thus $i>j$, so
\[\frac{g(t)}{h(t)}=p^{i-j}\ge p.\]
If $p^{i-j}=2$ then $g(t)=2h(t)$, giving $a=-a-1$, which is impossible. Hence $p^{i-j}\ge 3$, so $g(t)\ge 3h(t)$, leading to
\[t\le\frac{a-r+3}{4n}\le\frac{a}{4n}=\frac{1}{2(a+1)}<1,\]
again contradicting our hypothesis.
\end{proof}

\begin{rema}\label{re:negative}
Although Theorem~\ref{th:powers} applies to all $t\in{\mathbb Z}$, Proposition~\ref{prop:red} applies only to integers $t\ge 0$ and cannot be extended to the case $t<0$. For example, the polynomial
\[f(t)=f_{3,5}(t)=72t^2+54t+7=(12t+7)(6t+1),\]
satisfies $f(-1)=5^2$, with $g(-1)=h(-1)=-5$. Of course, negative values of $t$ are not relevant to the 2-designs considered in this paper.

The condition $n>1$ is required in Proposition~\ref{prop:red}, since the polynomial
\[f(t)=f_{1,1}(t)=8t^2+2t-1=(2t+1)(4t-1)\]
satisfies $f(1)=3^2$. The block designs $\mathcal D$ considered here all satisfy this condition.
\end{rema}

%%%%%%%%%%%%%%%

\section{Values at $t=0$}\label{t=0}

Proposition~\ref{prop:red} leaves open the possibility, which is relevant to 2-designs, that
\[f(0)=f_{n,r}(0)=\frac{r(r-1)}{2}-n\]
could be a prime power. Prime values $f_{n,r}(0)$ seem to arise quite frequently when $f_{n,r}$ is irreducible:
for example, of the twenty polynomials in Table~\ref{tab:up-to-n=9}, sixteen have prime values at $t=0$,
three have the value $1$, and $f_{8,18}$ has the value $145$.
However, the situation is rather different for reducible polynomials $f_{n,r}$,
those for which $n$ is a triangular number $a(a+1)/2$.

\begin{prop}\label{prop:realize}
Let $f_{n,r}$ be reducible, and satisfy (\ref{eq:f}) and (\ref{eq:conditions}). Then $f_{n,r}(0)$ is a prime power 
$p^e$, $e\ge 1$, if and only if $p$ is odd and one of the following occurs:
\begin{itemize}
\item[a)] $e=2i$ is even, with $n=(p^e-1)/8>1$, $a=(p^i-1)/2$ and $r=(3p^i+1)/2$, or
\item[b)] $p^e=7$, with $n=3$, $a=2$ and $r=5$ $($as in\/ {\rm Remark~\ref{re:negative})}.
\end{itemize}
\end{prop}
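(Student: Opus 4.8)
The plan is to exploit the explicit factorisation of a reducible $f_{n,r}$ established just before Proposition~\ref{prop:red}, namely
\[
f_{n,r}(0)=g(0)h(0)=\tfrac12(r+a)(r-a-1),
\]
where $n=a(a+1)/2$ and the factor $\tfrac12$ is absorbed into whichever linear factor has even coefficients. First I would record that since $f_{n,r}(0)=\binom{r}{2}-n$ with $r\ge 3$ and $n\ge 3$ (as $n>1$ is forced by reducibility, $a\ge 2$), one checks directly that $f_{n,r}(0)>0$ except in the single small case, which will turn out to be case~(b). The strategy is then purely arithmetic: if $f_{n,r}(0)=p^e$ then each of the two integer factors $g(0),h(0)$ must itself be a power of $p$ (up to sign), and I would compare their sizes to pin down the exponents.

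**Forcing the exponents.**
Write the two factors as powers of $p$. The key observation is that $g(0)$ and $h(0)$ differ by a fixed amount governed by $a$: from the formulas one has $(r+a)-(r-a-1)=2a+1=p^i$-type relations once one substitutes $8n+1=(2a+1)^2$. I would argue that the smaller factor cannot be a proper power of $p$ greater than $1$ unless the ratio of the two factors forces $p^i\mid$(their difference); since their difference is small relative to their product, this compels the smaller factor to equal $1$ (or $p$ itself in the exceptional case). Setting the smaller factor equal to $1$ gives a linear equation in $r$ and $a$, which I would solve to obtain $a=(p^i-1)/2$ and $r=(3p^i+1)/2$; substituting back into $n=a(a+1)/2$ yields $n=(p^e-1)/8$ with $e=2i$, and one verifies $n>1$. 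This reproduces branch~(a), and the parity constraint $r\equiv n+1$ from (\ref{eq:conditions}) together with $2n\mid\binom{r}{2}-(n+1)$ confirms $p$ must be odd and $e$ even.

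**The exceptional case and the converse.**
The degenerate possibility, where the smaller factor equals $p$ rather than $1$, must be isolated separately: here $g(0)=h(0)=\pm p$ (the situation flagged in Remark~\ref{re:negative}), which forces the two linear factors to coincide in absolute value, giving a finite system that I expect to have the unique solution $n=3,a=2,r=5,p^e=7$—this is branch~(b). Finally I would verify the converse: plugging the stated $(n,a,r)$ from either (a) or (b) back into $f_{n,r}(0)=\tfrac12(r+a)(r-a-1)$ and checking it equals $p^e$, and confirming that (\ref{eq:conditions}) holds (in particular $r<4n$ and the congruence mod $2n$).

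**Main obstacle.**
I expect the genuine difficulty to lie in cleanly ruling out the intermediate cases, where \emph{both} factors are nontrivial powers $p^i,p^j$ with $i,j\ge 1$ and $i\ne j$. The size-comparison argument used in Proposition~\ref{prop:red} relied on $t\ge 1$ to push one factor below $1$; at $t=0$ that slack disappears, so the elimination must instead come from a sharp divisibility bound showing $p^{\min(i,j)}$ divides the difference of the factors, which is too small to admit a proper prime-power divisor except in the one boundary case yielding~(b). Getting this bound tight enough to leave exactly the two listed families, and no spurious solutions, will be the crux.
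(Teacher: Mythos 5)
Your overall frame (factor $f_{n,r}(0)$, force each factor to be a power of $p$, then pin down the exponents) is the same as the paper's, but both of the mechanisms you propose for pinning down the exponents fail, and your identification of the two branches is in fact reversed. Branch (a) does \emph{not} come from setting the smaller factor equal to $1$: with $a=(p^i-1)/2$ and $r=(3p^i+1)/2$ one computes $(r+a)/2=p^i$ and $r-a-1=p^i$, so branch (a) is precisely the case where the two factors are \emph{equal}. Setting the smaller factor equal to $1$ in the parity case $r\equiv a \pmod 2$ instead gives $a=p^e-1$, $r=p^e+1$, which is not branch (a) and violates the congruence in (\ref{eq:conditions}); in the other parity case it gives $a=(p^e-3)/2$, $r=(p^e+3)/2$, which \emph{is} branch (b) when $p^e=7$ (there $f_{3,5}(0)=7\cdot 1$, with factors $7$ and $1$). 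Your description of branch (b) as ``$g(0)=h(0)=\pm p$'' confuses the value at $t=0$ with Remark~\ref{re:negative}: the equality $g=h=-5$ happens at $t=-1$, where $f_{3,5}(-1)=5^2$, not at $t=0$.

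More seriously, the tool you nominate for excluding the intermediate cases (both exponents $\ge 1$, unequal) --- that $p^{\min(i,j)}$ divides the difference of the factors and that this difference is ``too small'' --- is vacuous. In the case $r\equiv a\pmod 2$ the factors are $(r+a)/2=p^i$ and $r-a-1=p^j$, and the relevant difference is $2a+1=2p^i-p^j$, which is divisible by $p^{\min(i,j)}$ automatically; no contradiction can come from this. Indeed, the intermediate cases genuinely occur once the congruence in (\ref{eq:conditions}) is dropped: $(n,r)=(28,11)$ gives a reducible polynomial of the form (\ref{eq:f}) (here $a=7$, $r<4n$) with $f_{28,11}(0)=55-28=27=3^3$, of neither form (a) nor (b); it is excluded only because $55\not\equiv 29 \pmod{56}$. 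So the congruence $2n\mid\frac{r(r-1)}{2}-(n+1)=p^e-1$, which you invoke only at the end ``to confirm $p$ must be odd and $e$ even,'' has to be the engine of the entire argument. That is exactly how the paper proceeds: it writes the factor equations in the two parity cases, computes $\frac{r(r-1)}{2}-n-1=p^{i+j}-1$ and $2n$ explicitly in terms of $p^i,p^j$, and then observes that in the first case $i>j$ forces $0<p^{i+j}-1<2n$ (contradicting the congruence, hence $i=j$, giving (a)), while in the second case the congruence forces $p^{i-j}\le 8$, hence $i=j+1$ and $p\in\{3,5,7\}$, after which a short divisibility computation eliminates $p=3,5$ and leaves only $p=7$, $j=0$, giving (b). You flagged this elimination step as ``the crux'' and left it open; as written, your outline cannot close it.
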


Note that by (a) every even power $p^e>9$ of an odd prime $p$
can be realised as a value $f_{n,r}(0)$ of a reducible polynomial $f_{n,r}$.

\begin{exam}
One can realise $5^2$ as a value by taking $n=3$, $a=2$ and $r=8$. This gives
\[f(t)=f_{3,8}(t)=72t^2+90t+25=(6t+5)(12t+5)\]
with $f(0)=5^2$. Similarly, one can realise $7^2$ by taking $n=6$, $a=3$ and $r=11$, 
so that
\[f(t)=f_{6,11}(t)=288t^2+252t+49=(12t+7)(24t+7)\]
with $f(0)=7^2$. Taking $n=(13^4-1)/8=3\,570$ and $r=(3\cdot 13^2+1)/2=254$ we get
\[f(t)=f_{n,r}(t)=101\,959\,200\,t^2+3\,619\,980\,t+28\,561 = (7\,140t+169)(14\,280t+169)\]
with $f(0)=28\,561=13^4$.
\end{exam}

\noindent
{\em Proof}\/ of Proposition \ref{prop:realize}.
If we put $t=0$ in Case~(1) of the proof of Proposition~\ref{prop:red}, where $r\equiv a$ mod~$(2)$, we have
\[\frac{r+a}{2}=p^i\quad\hbox{and}\quad r-a-1=p^j\]
for integers $i, j\ge 0$ with $i+j=e\ge 1$ and $i\ge j$. Solving these simultaneous equations gives
\[r=p^i+\frac{p^j+1}{2}\quad\hbox{and}\quad a=p^i-\frac{p^j+1}{2},\]
so that
\[n=\frac{a(a+1)}{2}=\frac{1}{8}\left((2p^i-p^j)^2-1\right).\]
(Recall that $f_{n,r}$ is reducible if and only if $8n+1$ is a perfect square.)
Here we require $p^j$ to be odd, so that $r\equiv a$ mod~$(2)$; however, we reject solutions with $p=2$ and $j=0$ since they give $c=2^i$
and $r(r-1)/2\not\equiv n+1$ mod~$(2n)$, contradicting condition (\ref{eq:conditions}), so $p$ must be an odd prime.

The condition that $r(r-1)/2\equiv n+1$ mod~$(2n)$ also excludes many solutions when $p$ is odd. We have
\[\frac{r(r-1)}{2}-n-1=p^{i+j}-1\quad\hbox{and}\quad 2n=\frac{((2p^i-p^j)^2-1}{4},\]
so if $i>j$ then
\[0<\frac{r(r-1)}{2}-n-1<2n\]
and hence $r(r-1)/2\not\equiv n+1$ mod~$(2n)$. However, if we take $i=j$ then
\[r=\frac{3p^i+1}{2},\quad a=\frac{p^i-1}{2}\quad\hbox{and}\quad n=\frac{p^{2i}-1}{8},\]
so that $r<4n$ provided $p^i>3$, and
\[\frac{r(r-1)}{2}-n-1=p^{2i}-1=8n\equiv 0\; {\rm mod}\, (2n)\]
as required. Thus every even power $p^e=p^{2i}>9$ of an odd prime $p$ is the value
of some reducible polynomial $f_{n,r}$ at $t=0$, giving conclusion (a).

A similar argument applies in Case~(2) of the proof of Proposition~\ref{prop:red}, where $r\not\equiv a$ mod~$(2)$. We now have
\[r+a=p^i\quad\hbox{and}\quad \frac{r-a-1}{2}=p^j,\]
with $i>j$, so that
\[r=\frac{p^i+1}{2}+p^j\quad\hbox{and}\quad a=\frac{p^i-1}{2}-p^j,\]
giving
\[n=\frac{a(a+1)}{2}=\frac{1}{8}\left((p^i-2p^j)^2-1\right).\]
In this case
\[\frac{r(r-1)}{2}-n-1=p^{i+j}-1\quad\hbox{and}\quad 2n=\frac{((p^i-2p^j)^2-1}{4}.\]
We need
\[p^{i+j}-1=\frac{r(r-1)}{2}-n-1\ge 2n=\frac{p^{2i}-1}{4}-p^{i+j}+p^{2j}\]
so that
\[2p^{i+j}\ge\frac{p^{2i}+3}{4}+p^{2j}>\frac{p^{2i}}{4}\]
and hence $p^{i-j}\le 8$. Since $i>j$ and $p\ge 3$ this implies that $i-j=1$ and $p=3, 5$ or $7$. Thus only odd powers $p^e$ of these three primes can arise in Case~2.

Putting $i=j+1$ gives
\[\frac{r(r-1)}{2}-n-1=p^{2j+1}-1
\quad\hbox{and}\quad
2n=\frac{(p^{j+1}-2p^j)^2-1}{4}=\frac{(p-2)^2p^{2j}-1}{4}.\]
Now $2n$ divides $\frac{r(r-1)}{2}-n-1$, so multiplying by $4$ shows that
\[8n=(p-2)^2p^{2j}-1\quad\hbox{divides}\quad 4\left(\frac{r(r-1)}{2}-n-1\right)=4(p^{2j+1}-1)\]
Defining $q:=p^{2j}$, we see that
$(p-2)^2q-1$ divides $4(pq-1)$. We now apply this with $p=3$, $5$ and $7$ in turn.

If $p=3$ then $q-1$ divides $12q-4=12(q-1)+8$, so $q-1$ divides $8$, giving $q=1$ or $9$.
If $q=1$ then $j=0$, giving $r=3$, $a=0$ and $n=0$, whereas we need $n>1$.
If $q=9$ then $j=1$, giving $r=8$, $a=1$ and $n=1$, again too small.
Thus $p\ne 3$.

If $p=5$ then $9q-1$ divides $20q-4=2(9q-1)+2(q-1)$ and hence $9q-1$ divides $2(q-1)$ giving $q=1$.
Then $j=0$, so $r=4$, $a=1$ and $n=1$, whereas we need $n>1$. Thus $p\ne 5$.

If $p=7$ then $25q-1$ divides $28q-4=25q-1+3(q-1)$ and hence $25q-1$ divides $3(q-1)$ giving $q=1$. Then $j=0$, so $r=5$, $a=2$ and $n=3$, with $r<4n$ and $r(r-1)/2-n-1=6\equiv 0$ mod~$(2n)$; this gives the polynomial
\[f(t)=f_{3,5}(t)=72t^2+54t+7=(12t+7)(6t+1)\]
in conclusion (b), with $f(0)=7$. 
\hfill$\Box$

%%%%%%%%%%%%%%%

\section{Conclusions}

We have found large numbers of prime values of those polynomials $f_{n,r}$ appearing in~\cite{ADP} for which $n$ is not a triangular number. The numbers found agree closely with the estimates for them provided by Li's recent version of the Bateman--Horn Conjecture. While this does not prove the conjecture in~\cite{ADP} that these polynomials take infinitely many prime values, and thus give infinite families of block designs, it provides strong evidence for this, and it also adds extra support for the validity of the Bunyakovsky and Bateman--Horn Conjectures.

%%%%%%%%%%%%%%

\section{Acknowledgements}

We are grateful to Yuri Bilu, to Cheryl Praeger and to Weixiong Li for many useful comments.
Alexander Zvonkin was  partially supported by the ANR project {\sc Combin\'e} (ANR-19-CE48-0011).

%%%%%%%%%%%%%%%%%
%%%%%%%%%%%%%%%%%

\bigskip

\end{document}